\documentclass[twoside,a4paper,12pt]{amsart}
\usepackage{amsfonts}
\usepackage{amssymb}
\usepackage{amsfonts}
\usepackage{amssymb}
\usepackage{amsthm}
\usepackage{amsmath}
\usepackage{a4wide}
\usepackage{amsthm}
\usepackage[usenames,dvipsnames,svgnames,table]{xcolor}

\hyphenation{func-tio-nals}
\hyphenation{mi-ni-ma-li-ty}
\hyphenation{phe-no-me-na}

\theoremstyle{plain}
\newtheorem{teo}{Theorem}[section]

\newtheorem{Corollary}[teo]{Corollary}
\newtheorem{ackn}{Acknowledgement\!}

\theoremstyle{definition}

\theoremstyle{remark}
\newtheorem{Remark}[teo]{Remark}

\newcommand{\intF}{\overset{\circ}{F}}
\newcommand{\mbd}{{\partial}}

\newcommand{\LL}{\mathcal{L}}

\def\R{\mathbb R}

\renewcommand{\d}{\delta }

\newcommand{\e }{\varepsilon }
\newcommand{\g }{\gamma}

\newcommand{\MM}{\mathcal{M}}

\newcommand{\s }{\sigma }

\renewcommand{\th }{\theta }

\newcommand{\intbar}{\etaathop{\int\etaakebox(-13.5,0){\rule[4pt]{.7em}{0.3pt}}%
\kern-6pt}\nolimits}

\newcommand{\be}{\begin{equation}}
\newcommand{\ee}{\end{equation}}
\newcommand{\bea}{\begin{equation*}}
\newcommand{\eea}{\end{equation*}}

\newcommand{\grad}{\nabla}

\newcommand{\mmm}{\boldsymbol{\textrm{m}}}
\newcommand{\PPP}{\textrm{P}}

\flushbottom
\theoremstyle{plain}

\newtheorem{lemma}[teo]{Lemma}
\newtheorem{prop}[teo]{Proposition}

\theoremstyle{definition}
\newtheorem{dfnz}[teo]{Definition}

\theoremstyle{remark}

\def\R{{{\mathbb R}}}

\def\H{{{\mathcal H}}}

\def\C{\mathcal{C}}
\def\HHH{{\mathrm H}}

\def\F{{\mathcal F}}



\renewcommand{\d}{\delta }

\def\be{\begin{equation}}
\def\ee{\end{equation}}
\def\bea{\begin{eqnarray*}}
\def\bean{\begin{eqnarray}}
\def\eean{\end{eqnarray}}
\def\eea{\end{eqnarray*}}

\author[A. Magni]
{Annibale Magni}
\address[Annibale Magni]{Westf\"alische--Wilhelms Universit\"at M\"unster, Einsteinstrasse 62, 48149 M\"unster (Germany).}
\email[]{magni@uni-muenster.de}
\author[M. Novaga]
{Matteo Novaga}
\address[Matteo Novaga]{Dipartimento di Matematica
Universit\'a di Pisa, Largo Bruno Pontecorvo 5, 56127 Pisa (Italy).}
\email[]{novaga@dm.unipi.it}
\title
[Non Lower Semicontinuous Perimeter Functionals]
{A note on non lower semicontinuous perimeter functionals on partitions}
\begin{document}
\begin{abstract}
We consider isotropic non lower semicontinuous weighted perimeter functionals defined on partitions of domains in $\R^n$. Besides identifying a condition on the structure of the domain which ensures the existence of minimizing configurations, we describe the structure of such minima, as well as their regularity.
\end{abstract}
\maketitle

\section{Introduction}
In this note we address the existence, the structure and the regularity properties of minimizing configurations for weighted non lower semicontinuous perimeter functionals of the form
\begin{equation} \label{eq:Functional}
\F_{\Omega,\mmm}(E_1,E_2,E_3):= \sum_{i < j \in \{1,2,3\}} \s_{ij} \H^{n-1}(\partial^*E_i \cap \partial^*E_j \cap \Omega)\,,
\end{equation}
defined on partitions of a domain $\Omega \subset \R^n$ in three sets with prescribed Lebesgue measures $\mmm := (m_1,m_2,m_3)$, where $\s_{ij} > 0$. This functionals arise in the modelling of multicomponent systems interacting at the contact interfaces via isotropic energies. Their diffuse approximation as well as methods to study their diffuse gradient dynamic have been recently considered in \cite{EsOt14} and \cite{BeMa15}. We believe that non lower semicontinuous functionals as \eqref{eq:Functional} can represent a good model to describe from a macroscopic point of view the effect that surface tension has in selecting equilibrium configurations of biological cell sorting phenomena for two species. A rigorous microscopic cellular description of these phenomena have been given amongst others in \cite{VoDe10} (see also references therein), whose results are in accordance with the ones in our work. Identifying the sets $E_1$ and $E_2$ with the regions in the domain $\Omega$ which are occupied by the two cell species and denoting by $E_3$ the remaining environment in which the cells can move, we can find a stable condition (equation \eqref{eq:violation_triangle}) on the three surface tensions $\s_{ij}$ under which the minima of the functional exhibit separation between one of the cell types and the environment. For a particular class of domains, including the most significant biological cases (see Definition \ref{dfnz:isop-Foliation}), we can also describe quite explicitly the shape which is taken by each region in correspondence of a minimum (Proposition \ref{prop:existence-minimizers}).\\
The first results on weighted perimeter functionals on partitions have been proven with different methods in \cite{AmBr90} and \cite{Whit96}, where it is shown that (independently of the number of the sets in the partition) \eqref{eq:Functional} is lower semicontinuous if and only if
\begin{equation} \label{eq:triangle}
\s_{ij} \leq \s_{ki} + \s_{kj} \quad \textrm{for all}\,\, i \neq j \neq k \in \{1,2,3\}\,. 
\end{equation}
As for the regularity of the minimizers, the strict triangle inequalities in \eqref{eq:triangle} are sufficient to prove that $\H^{n-1}-$almost every point of the minimizing interfaces belongs to the boundary of just two elements of the partition (see \cite{Whit96} and \cite{Leon01}) and this allows to apply the standard regularity results for minimizing boundaries with prescribed volume.\\
Some topological properties of the minimizers of \eqref{eq:Functional} in the case of non lowersemicontinuity (i.e. when \eqref{eq:triangle} is violated) have been described in \cite{Morg98} and, under the hypothesis of existence, some results on the regularity of the minimizing boundaries have been announced in \cite{Whit96}.\\
In the first part of our note we introduce the class of domains $\Omega \subset \R^n$ which are foliated by isoperimetric sets (see Definition \ref{dfnz:isop-Foliation} below) and we prove that on these domains the functional \eqref{eq:Functional} has always a minimum. In the second part we show that if \eqref{eq:Functional} admits a minimizer, then, independently of the domain $\Omega$, two of the phases do not come in contact and this allows us to prove regularity for the boundaries of the minimizing configurations. We close the paper describing a situation in which \eqref{eq:Functional} has no minimizer.
\begin{ackn} \rm
The authors thank A. Stevens (M\"unster) for inspiring this research by pointing them out (cell)
sorting phenomena in bio-chemistry and biology, as well as their mathematical modelling. The authors also thank G.P. Leonardi for some useful comments on a preliminary version of this paper.\\
The research of A. Magni was sponsored by the Excellence Cluster "Cells in Motion" (CiM), M\"unster: FF-2013-29. The work of M. Novaga was partly supported by the Italian CNR-GNAMPA and by the University of Pisa via grant PRA-2015-0017
\end{ackn}

\section{Notation}
With $\LL^n$ we denote the $n$-dimensional Lebesgue measure. Given a set $F \subset \R^n$ with finite perimeter, we denote its perimeter with $\PPP(F)$ and its relative perimeter with respect to an open set $\Omega \subset \R^n$ with $\PPP(F;\Omega)$. By $\chi_F$ we denote the characteristic function of $F$, by $\partial^* F$ its reduced boundary and by $|F|:= \int_{\R^n}\chi_F(x) dx$ its volume. For $\LL^n$-almost all points $x \in \R^n$ the density at $x$ with respect to the Lebesgue measure of a set $F \subset \R^n$ having finite perimeter is denoted by
\begin{equation*}
\th_E(x) := \lim_{r \downarrow 0} \frac{|B_r(x) \cap E|}{|B_r(x)|}\,,
\end{equation*}
where $B_r(x) \subset \R^n$ is the Euclidean ball with center $x$ and radius $r$.\\
In order to have notions of boundary, closure and interior for a set $F \subset \R^n$ with finite perimeter, which are invariant under $\LL^n$ negligible changes, we define the measure theoretic boundary, closure and interior part respectively as
\begin{equation*}
\mbd F := \{ x \in \R^n: \forall r> 0 \quad |F \cap B_r(x)| \notin \{0,|B_r(x)|\}\}\,,
\end{equation*}
\begin{equation*}
\overline{F} := \{ x \in \R^n: \forall r> 0 \quad |F \cap B_r(x)| \neq 0 \}\,,
\end{equation*}
\begin{equation*}
\intF := \{ x \in \R^n: \exists r> 0 \quad |F \cap B_r(x)| = |B_r(x) \}\,.
\end{equation*}
If $\partial^*F$ is sufficiently regular, $\HHH_{\partial^*F}(x)$ denotes the scalar mean curvature of $\partial^*F$ at $x \in \partial^*F$ (i.e. the sum of the principal curvatures of the surface at the point $x$).\\
Let $\Omega \subset \R^n$ be an open set and let $\mmm:= (m_1,m_2)$, with $0<m_1,m_2<\infty$. Let also $E_1, E_2, E_3 \subset \Omega$ be three sets with finite perimeter. We say that $(E_1,E_2,E_3)$ belongs to $\C_{\Omega,\mmm}$ (the set of admissible test configurations) if $|E_i \cap E_j| = 0$ for all $i<j \in \{1,2,3\}$, $|E_1|=m_1$, $|E_2|=m_2$ and $|\Omega \setminus (E_1 \cup E_2 \cup E_3)| = 0$.\\
For $i < j \in \{1,2,3\}$ consider $\s_{ij} > 0$ such that
\be \label{eq:violation_triangle}
\s_{13} > \s_{23} + \s_{12} \,.
\ee
On the set $\C_{\Omega,\mmm}$ we define the functional
\be \label{eq:main_functional}
\F_{\Omega,\mmm}(E_1,E_2,E_3) := \sum_{i < j \in \{1,2,3\}} \s_{ij} \H^{n-1}(\partial^*E_i \cap \partial^*E_j \cap \Omega)\,.
\ee
The set of triples $(F_1,F_2,F_3) \in \C_{\Omega,\mmm}$ minimizing $\F_{\Omega,\mmm}$ will be denoted by $\MM(\F_{\Omega,\mmm})$.\\
%
%
%

\noindent We are interested at existence and regularity of minimizers of 
$\F_{\Omega,\mmm}$ on $\C_{\Omega,\mmm}$.

\section{Existence of minimizers in foliated domains}
We now define a class of domains on which the functional \eqref{eq:main_functional} admits a minimum for any choice of $\mmm$.
\begin{dfnz} \label{dfnz:isop-Foliation}
Let $\Omega \in \R^n$ be an open set with (possibly infinite) Lebesgue measure $|\Omega|$ and suppose that for any $0 < \rho < |\Omega|$ there exists a minimizer for the problem
\be \label{eq:isop-Omega}
\min\,\Big\{\PPP(E;\Omega):\ E \subset \Omega,\, |E|=\rho \Big\}\,.
\ee
We say that $\Omega$ is an {\it isoperimetrically foliated domain} if there exists a selection of minimizers $E^{(\rho)}$ of \eqref{eq:isop-Omega} such that
\begin{equation} \label{eq:isop-Foliation}
\rho_1 < \rho_2 \in (0,|\Omega|) \Longrightarrow \begin{cases} E^{(\rho_1)} \subset E^{(\rho_2)} \\
                                                               \H^{n-1}(\partial E^{(\rho_1)}\cap \partial E^{(\rho_2)} \cap \Omega) = 0
                                                 \end{cases}
\end{equation}
\end{dfnz}
\begin{Remark}\label{remo}\rm
By standard regularity theory (see \cite{Magg12}), $\partial E^{(\rho)}$ is $C^{\infty}$ away from a closed singular set of Hausdorff dimension at most $n-8$.\\
We also observe that if $\Omega$ is an isoperimetrically foliated set as in Definition \ref{dfnz:isop-Foliation}, then solutions to problem \eqref{eq:isop-Omega} which satisfy \eqref{eq:isop-Foliation} foliate $\Omega$ with their boundaries, i.e.
\be \label{eq:isop-Foliation-bd}
\Omega = \bigcup_{0 < \rho < |\Omega|} (\partial E^{(\rho)} \cap \Omega) \,.
\ee
\end{Remark}
\begin{Remark} \label{rm:examples_domains}\rm
Examples of isoperimetrically foliated domains are $\R^n$ itself, ellipses in $\R^2$ and $B^n$.\\
Notice that the unit square in $\R^2$ is not an isoperimetrically foliated domain, as for $\rho \in [0,\frac{1}{\pi}]$ the corresponding $E^{(\rho)}$ is a quarter of disk centred at any of the four vertices, while, for $\rho \in [\frac{1}{\pi}, 1 - \frac{1}{\pi}]$, $E^{(\rho)}$ is a vertical (or horizontal) stripe.
\end{Remark}
Under some assumption on the regularity of $\partial \Omega$, the existence of a selection of isoperimetric sets $E^{(\rho)}$ with the property
$$\rho_1 < \rho_2 \in (0,|\Omega|) \Longrightarrow E^{(\rho_1)} \subset E^{(\rho_2)}
$$
is sufficient to conclude that $\H^{n-1}(\partial E^{(\rho_1)}\cap \partial E^{(\rho_2)} \cap \Omega) = 0$.
\begin{prop} \label{prop:boundary-separation}
Within the setting of Definition \ref{dfnz:isop-Foliation}, assume that there exists a $\rho_0 > 0$, such that for all $\rho < \rho_0$ the sets $E^{(\rho)}$ are connected, with connected boundary. If
$$\rho_1 < \rho_2 \in (0,|\Omega|) \Longrightarrow E^{(\rho_1)} \subset E^{(\rho_2)}\,,$$
then $\H^{n-1}(\partial E^{(\rho_1)}\cap \partial E^{(\rho_2)} \cap \Omega) = 0$.
\end{prop}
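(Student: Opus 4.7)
The plan is a contradiction argument combining standard regularity for isoperimetric boundaries, unique continuation for constant mean curvature hypersurfaces, and the connectedness hypothesis. Suppose, toward a contradiction, that $\H^{n-1}(\partial E^{(\rho_1)}\cap \partial E^{(\rho_2)}\cap \Omega)>0$.

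First I would produce an open piece of shared smooth boundary. By Remark \ref{remo} each $\partial E^{(\rho_i)}$ is analytic away from a closed singular set of Hausdorff dimension at most $n-8$; applying the density theorem to the rectifiable intersection yields a point $x_0$ which is of $\H^{n-1}$-density one in the intersection and a smooth point of both boundaries. Representing $\partial E^{(\rho_1)}$ and $\partial E^{(\rho_2)}$ as analytic graphs over the common tangent plane at $x_0$, the density-one property forces the two graphs to agree on a full open neighborhood of $x_0$. The inclusion $E^{(\rho_1)}\subset E^{(\rho_2)}$ guarantees that both sets lie on the same side of this shared piece, so the inward unit normals coincide as well.

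Next I would globalize this coincidence. Both surfaces satisfy the Euler-Lagrange equation of the volume-constrained perimeter minimization, namely the constant mean curvature equation, with Lagrange multipliers $\lambda_1,\lambda_2$. Agreement of orientation at $x_0$ forces $\lambda_1=\lambda_2$, so both boundaries solve the same quasilinear elliptic PDE and coincide on an open set; analytic unique continuation then propagates the coincidence throughout the connected component of $x_0$ in the regular part of $\partial E^{(\rho_1)}$. Since the singular set has codimension at least seven in $\partial E^{(\rho_1)}$ it cannot disconnect it, and the connectedness of $\partial E^{(\rho_1)}$ guaranteed by hypothesis (after reducing to the small-volume case $\rho_1<\rho_0$) implies the global inclusion $\partial E^{(\rho_1)}\subset \partial E^{(\rho_2)}$ up to an $\H^{n-1}$-null set. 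Moreover, the local pointwise coincidence of boundaries together with $E^{(\rho_1)}\subset E^{(\rho_2)}$ forces $E^{(\rho_1)}$ and $E^{(\rho_2)}$ to agree set-theoretically in an open neighborhood of $\partial E^{(\rho_1)}$.

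Finally I would extract the contradiction from the excess mass. The set $F:=E^{(\rho_2)}\setminus \overline{E^{(\rho_1)}}$ has positive Lebesgue measure (since $\rho_1<\rho_2$) and lies at positive distance from $E^{(\rho_1)}$ in $\Omega$ by the previous step; hence the perimeter splits as $\PPP(E^{(\rho_2)};\Omega)=\PPP(E^{(\rho_1)};\Omega)+\PPP(F;\Omega)\geq \PPP(E^{(\rho_1)};\Omega)+\PPP(E^{(\rho_2-\rho_1)};\Omega)$, where the inequality uses the minimality of $E^{(\rho_2-\rho_1)}$. Strict subadditivity of the isoperimetric profile $\rho\mapsto \PPP(E^{(\rho)};\Omega)$ then produces $\PPP(E^{(\rho_2)};\Omega)<\PPP(E^{(\rho_1)};\Omega)+\PPP(E^{(\rho_2-\rho_1)};\Omega)$, contradicting the splitting identity. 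The hard part will be the reduction to the case $\rho_1<\rho_0$ in the second step; I expect this to be handled by fixing an auxiliary volume $\rho_3<\min(\rho_0,\rho_1)$ and using the nested family $E^{(\rho_3)}\subset E^{(\rho_1)}\subset E^{(\rho_2)}$ to squeeze intermediate boundaries onto the shared piece, thereby propagating the coincidence down to $\partial E^{(\rho_3)}$, where the connectedness hypothesis applies directly.
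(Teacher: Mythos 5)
Your skeleton — find a smooth common point of the two boundaries, use the constant mean curvature equation together with analyticity/the strong maximum principle to propagate the coincidence over a whole connected component of the regular part, then derive a contradiction from $\rho_1<\rho_2$ — is the same as the paper's. However, two of your steps have genuine gaps. The first is the reduction to the connectedness hypothesis: you need $\partial E^{(\rho_1)}$ (and in fact $\partial E^{(\rho_2)}$) to be connected for the \emph{given} volumes, whereas the hypothesis only provides this for $\rho<\rho_0$. Your proposed fix, introducing an auxiliary $\rho_3<\min(\rho_0,\rho_1)$ and ``squeezing intermediate boundaries onto the shared piece,'' has no mechanism behind it: $\partial E^{(\rho_3)}$ sits strictly inside $E^{(\rho_1)}$ and is in general disjoint from $\partial E^{(\rho_1)}\cap\partial E^{(\rho_2)}$, so nothing transfers the coincidence down to it. The paper closes this gap with a separate claim proved by continuation in $\rho$: if connectedness of $E^{(\rho)}$ and of $\partial E^{(\rho)}$ first failed at some $\rho_M<|\Omega|$, the relative isoperimetric inequality would force $\partial_\rho \PPP(E^{(\rho)};\Omega)$ to blow up at $\rho_M$, contradicting the finiteness of the Lagrange multiplier $\lambda_{\rho_M}$. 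Some argument of this kind is indispensable; without it the proposition is not reduced to the small-volume regime.

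The second gap is in your final step. From $\partial E^{(\rho_1)}\subset\partial E^{(\rho_2)}$ and the splitting $\PPP(E^{(\rho_2)};\Omega)=\PPP(E^{(\rho_1)};\Omega)+\PPP(F;\Omega)$ with $|F|=\rho_2-\rho_1$, minimality only gives the \emph{non-strict} inequality $\PPP(E^{(\rho_2)};\Omega)\geq\PPP(E^{(\rho_1)};\Omega)+\PPP(E^{(\rho_2-\rho_1)};\Omega)$. To contradict this you invoke \emph{strict} subadditivity of the relative isoperimetric profile, which is not established anywhere and is itself a nontrivial property (essentially equivalent to connectedness of isoperimetric regions, i.e. to what you are trying to prove); without strictness there is no contradiction. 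The paper avoids this machinery entirely: once both boundaries are connected and share a nonempty connected component of positive $\H^{n-1}$ measure, they coincide, hence $E^{(\rho_1)}=E^{(\rho_2)}$, which is impossible since $\rho_1<\rho_2$. If you replace your perimeter-splitting argument by this direct observation (which also uses connectedness of $\partial E^{(\rho_2)}$, not only of $\partial E^{(\rho_1)}$), the second gap disappears; the first still requires the continuation argument.
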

\begin{proof}
The assumptions on $\Omega$ ensure that the relative isoperimetric inequality holds, i.e. there exists a $C_{\Omega} > 0$ such that $P(E,\Omega)\geq C_\Omega \min(|E|^\frac{n-1}{n}, |\Omega\setminus E|^\frac{n-1}{n})$. Moreover, all the sets $E^{(\rho)}$ have $C^{\infty}$ boundary on the complement of a closed set of Hausdorff dimension at most $n-8$ and on the regular part of the boundary there holds $H_{\partial E^{(\rho)}} = \lambda_{\rho}$, for a $\lambda_{\rho}\in\R$.\\
We claim that $E^{\rho}$ and $\partial E^{(\rho)}$ are connected for all $\rho \in [0,|\Omega|)$. By hypothesis, this is true for all $\rho < \rho_0$. Let $\rho_M$ be the supremum of the set of values for which this property holds and assume that $\rho_M < |\Omega|$. The existence of a minimizer for \eqref{eq:isop-Omega} for sets with mass equal to $\rho_M$, together with the relative isoperimetric inequality, would imply that $\partial_{\rho} P(E^{(\rho)};\Omega)|_{\rho=\rho_M} = + \infty$, and this would contradict that $\partial_{\rho} P(E^{(\rho)}; \Omega) |_{\rho = \rho_M} = \lambda_{\rho} \in \R$ on the regular part of $\partial E^{(\rho)}$ and the claim is proven.\\
Suppose now by contradiction that there exist $\rho_1 < \rho_2$ such that $\mathcal H^{n-1}(\partial E^{(\rho_1)}\cap\partial E^{(\rho_2)}\cap\Omega)>0$. By the strong maximum principle it would follow that $\lambda_{\rho_1}=\lambda_{\rho_2}$ and that $\partial E^{(\rho_1)}\cap\Omega$ and $\partial E^{(\rho_2)}\cap\Omega$ have a common non empty connected component with non zero $n-1$ Hausdorff measure. In view of the previous claim, this would imply that $E^{(\rho_1)} = E^{(\rho_2)}$, but this is impossible.
\end{proof}
\begin{Remark} \rm
The assumption in the statement of Proposition \ref{prop:boundary-separation} are clearly satisfied if $\Omega$ is bounded with $C^1$ boundary and  in the case of a convex $\Omega$ (see \cite{RiVe15}, Proposition 6.6). This follows from tha fact that in these cases, for $\rho > 0$ amall enough, $E^{(\rho)}$ is close to the intersection of $\Omega$ with a ball centered at a point of $\partial\Omega$.
\end{Remark}
\begin{teo} \label{prop:existence-minimizers}
Let $\Omega \subset \R^n$ be an isoperimetrically foliated domain and let $\mmm:= (m_1,m_2)$, with $0<m_1,m_2<\infty$ and $m_1 + m_2 < |\Omega|$. 
Then $\F_{\Omega,\mmm}$ attains its minimum in $\C_{\Omega,\mmm}$.
\end{teo}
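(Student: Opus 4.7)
The plan is to exhibit an explicit candidate minimizer built directly from the isoperimetric foliation of $\Omega$, and then to use the strict reverse triangle inequality \eqref{eq:violation_triangle} to show that every admissible configuration has energy at least as large.

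First I would set
\begin{equation*}
F_1 := E^{(m_1)}, \qquad F_2 := E^{(m_1+m_2)} \setminus E^{(m_1)}, \qquad F_3 := \Omega \setminus E^{(m_1+m_2)},
\end{equation*}
where $\{E^{(\rho)}\}$ is a selection of minimizers of \eqref{eq:isop-Omega} satisfying \eqref{eq:isop-Foliation}. The inclusion $E^{(m_1)} \subset E^{(m_1+m_2)}$ together with $m_1+m_2 < |\Omega|$ and the volume constraints give $(F_1,F_2,F_3) \in \C_{\Omega,\mmm}$. Crucially, the foliation property forces $\H^{n-1}(\partial^* F_1 \cap \partial^* F_3 \cap \Omega) = 0$, so that the expensive $\s_{13}$--interface contributes nothing and the energy collapses to
\begin{equation*}
\F_{\Omega,\mmm}(F_1,F_2,F_3) = \s_{12}\,P(E^{(m_1)};\Omega) + \s_{23}\,P(E^{(m_1+m_2)};\Omega).
\end{equation*}

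To prove optimality, I would fix an arbitrary $(E_1,E_2,E_3) \in \C_{\Omega,\mmm}$, write $\Sigma_{ij} := \partial^* E_i \cap \partial^* E_j \cap \Omega$, and exploit \eqref{eq:violation_triangle} on the $\Sigma_{13}$--term to bound
\begin{equation*}
\F_{\Omega,\mmm}(E_1,E_2,E_3) \geq \s_{12}\bigl[\H^{n-1}(\Sigma_{12}) + \H^{n-1}(\Sigma_{13})\bigr] + \s_{23}\bigl[\H^{n-1}(\Sigma_{13}) + \H^{n-1}(\Sigma_{23})\bigr].
\end{equation*}
The standard description of reduced boundaries inside a three-phase BV partition identifies the first bracket with $P(E_1;\Omega)$ and the second with $P(E_1 \cup E_2;\Omega)$. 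Since $|E_1|=m_1$ and $|E_1 \cup E_2| = m_1+m_2$, the isoperimetric minimality of $E^{(m_1)}$ and $E^{(m_1+m_2)}$ then yields
\begin{equation*}
\F_{\Omega,\mmm}(E_1,E_2,E_3) \geq \s_{12}\,P(E^{(m_1)};\Omega) + \s_{23}\,P(E^{(m_1+m_2)};\Omega) = \F_{\Omega,\mmm}(F_1,F_2,F_3),
\end{equation*}
so that $(F_1,F_2,F_3)$ is a minimizer.

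The only nontrivial technical point is the identification $\partial^* E_1 \cap \Omega = \Sigma_{12} \cup \Sigma_{13}$ (and analogously $\partial^*(E_1 \cup E_2) \cap \Omega = \Sigma_{13} \cup \Sigma_{23}$) up to $\H^{n-1}$--null sets. This relies on the partition hypothesis $|\Omega \setminus (E_1 \cup E_2 \cup E_3)| = 0$ and on the classical fact that at $\H^{n-1}$--a.e.\ point of $\partial^* E_1$ the densities of $E_2$ and $E_3$ belong to $\{0,1/2\}$ and sum to $1/2$, so exactly one of the other two phases meets $E_1$ there. Once this bookkeeping is in place, the whole argument reduces to a single application of \eqref{eq:violation_triangle} followed by the isoperimetric property built into Definition \ref{dfnz:isop-Foliation}, with no compactness or relaxation needed.
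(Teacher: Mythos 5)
Your proposal is correct and follows essentially the same route as the paper: the same explicit candidate $(E^{(m_1)},\, E^{(m_1+m_2)}\setminus E^{(m_1)},\, \Omega\setminus E^{(m_1+m_2)})$, the same use of $\s_{13}>\s_{12}+\s_{23}$ to regroup the energy into $\s_{12}P(E_1;\Omega)+\s_{23}P(E_1\cup E_2;\Omega)$ (the paper writes the second term as $P(E_3;\Omega)$, which is the same quantity), and the same appeal to the isoperimetric minimality built into Definition \ref{dfnz:isop-Foliation}. Your additional remarks on the $\H^{n-1}$-a.e.\ decomposition of $\partial^* E_1$ into $\Sigma_{12}\cup\Sigma_{13}$ just make explicit a step the paper leaves implicit.
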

\begin{proof}
Let $(E_1,E_2,E_3) \in \C_{\Omega,\mmm}$. By means of \eqref{eq:violation_triangle} and \eqref{eq:main_functional}, we obtain
\be
\begin{split}
\F_{\Omega,\mmm}(E_1,E_2,E_3) & \geq (\s_{12} + \s_{23}) \H^{n-1}(\partial^*E_1 \cap \partial^*E_3 \cap \Omega) + \s_{12} \H^{n-1}(\partial^*E_1 \cap \partial^*E_2 \cap \Omega)\\
                                & \quad + \s_{23} \H^{n-1}(\partial^*E_2 \cap \partial^*E_3 \cap \Omega) \\
                                & = \s_{12} \PPP(E_1;\Omega) + \s_{23} \PPP(E_3;\Omega)\\
                                & \geq \s_{12} \PPP(E^{(m_1)};\Omega) + \s_{23} \PPP(E^{(m_1 + m_2)};\Omega)\,.
\end{split}
\ee
Since $\Omega$ is isoperimetrically foliated, the infimum of \eqref{eq:main_functional} is attained for the admissible choice $F_1 := E^{(m_1)}$, $F_2 := E^{(m_1 + m_2)} \setminus E^{(m_1)}$ and $F_3 := \Omega \setminus E^{(m_1 + m_2)}$.
\end{proof}

\begin{Remark}\rm
Thanks to the regularity of $\partial E^{(\rho)}$ (see Remark \ref{remo}),
the minimizing sets $F_1,\,F_2,\,F_3$ constructed in Theorem 
\ref{prop:existence-minimizers} have boundaries of class
$C^{\infty}$ away from a closed singular set of Hausdorff dimension at most $n-8$.
\end{Remark}

\begin{Remark}\rm
It is easy to see that the existence of an isoperimetric foliation of $\Omega$ is sufficient but not necessary for the existence of minimizers of \eqref{eq:main_functional}. In view of Remark \ref{rm:examples_domains}, if $\Omega = (0,1) \times (0,1) \subset \R^2$, $m_1 < \pi/16$, $m_3 < \pi/16$ and $m_2 = 1 - m_1 - m_3$, the minimum of \eqref{eq:main_functional} is attained for $F_1 = \{x^2 + y^2 < 4 m_1 / \pi\} \cap \Omega$, $F_3 = \{(x-1)^2 + (y-1)^2 < 4 m_2 / \pi \} \cap \Omega$ and $F_2 = \Omega \setminus \overline{(F_1 \cup F_2)}$.
\end{Remark}
\section{Regularity of minimizers in general domains}

We state a result which has been stated in \cite{Whit96} in a slightly different form and whose proof is an easy modification of the one of Theorem 3.1 in \cite{Leon01}.
\begin{teo} \label{thm:no-infiltration}
Suppose that $(F_1,F_2,F_3) \in \MM(\F_{\Omega,\mmm})$. Then there exist $\eta , r > 0$ such that, for all $\rho < r$, $x \in \R^n$ and $k \in \{1,3\}$ holds
\be \label{eq:no-infiltration}
|F_k \cap B_{\rho}(x)| < \eta \rho^n \quad \Rightarrow \quad |F_k \cap B_{\rho / 2}(x)| = 0 \,.
\ee
\end{teo}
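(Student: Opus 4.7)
The plan is to convert the minimality of $(F_1,F_2,F_3)$ into an ordinary differential inequality for $\phi(\tau):=|F_k\cap B_\tau(x)|$ and integrate it from $\rho/2$ to $\rho$. Without loss of generality I take $k=1$; the case $k=3$ is symmetric after exchanging the roles of $\s_{12}$ and $\s_{23}$, since the hypothesis \eqref{eq:violation_triangle} is symmetric in these two coefficients.

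For a.e.\ $\tau\in(\rho/2,\rho)$ I build a competitor $(\tilde F_1,\tilde F_2,\tilde F_3)\in\C_{\Omega,\mmm}$ as follows: inside $B_\tau(x)$ I replace $F_1$ by $F_2$ (keeping $F_3$ unchanged), and then I restore the volume constraint $|\tilde F_1|=m_1$ by an Almgren-type volume-fixing perturbation supported in a fixed ball disjoint from $B_\rho(x)$. The decisive observation is that \eqref{eq:violation_triangle} gives $\s_{13}-\s_{23}>\s_{12}>0$: inside $B_\tau(x)$ the swap annihilates the $F_1/F_2$ interfaces and converts the $F_1/F_3$ interfaces into $F_2/F_3$ interfaces, saving at least $\s_{12}\,\PPP(F_1;B_\tau(x))$ of energy. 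On $\partial B_\tau(x)$ the swap may only create new $F_1/F_2$ interfaces of total cost at most $\s_{12}\HH^{n-1}(\partial B_\tau(x)\cap F_1)=\s_{12}\phi'(\tau)$ for a.e.\ $\tau$, while the Almgren correction costs at most $C\phi(\tau)$ with $C$ independent of $\tau$ and $\rho$. Minimality then yields
\[
\s_{12}\,\PPP(F_1;B_\tau(x))\ \leq\ \s_{12}\phi'(\tau)+C\phi(\tau).
\]

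Combining with the relative isoperimetric inequality in $\R^n$ applied to $F_1\cap B_\tau(x)$, which for $\phi(\tau)\leq|B_\tau(x)|/2$ reads
\[
c_n\phi(\tau)^{(n-1)/n}\ \leq\ \PPP(F_1\cap B_\tau(x);\R^n)\ =\ \PPP(F_1;B_\tau(x))+\phi'(\tau),
\]
and absorbing the term $C\phi(\tau)$ into the leading one (which is possible because $\phi(\tau)^{1/n}\leq\eta^{1/n}\rho$ can be made arbitrarily small by shrinking $r$), one obtains $(\phi^{1/n})'(\tau)\geq c'$ for a.e.\ $\tau\in(\rho/2,\rho)$, with $c'>0$ depending only on $n$ and the $\s_{ij}$. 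Integration gives $\phi(\rho)^{1/n}\geq\phi(\rho/2)^{1/n}+c'\rho/2$; hence if $\phi(\rho/2)>0$ then $\phi(\rho)\geq(c'\rho/2)^n$, which contradicts $\phi(\rho)<\eta\rho^n$ as soon as $\eta<(c'/2)^n$. This forces $\phi(\rho/2)=0$, proving \eqref{eq:no-infiltration}.

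The only substantive deviation from the lower-semicontinuous argument of \cite{Leon01} -- and hence the one step that really needs the non-triangle-inequality hypothesis -- is the choice of replacement phase: for $k\in\{1,3\}$ one must swap $F_k$ for the \emph{middle} phase $F_2$, so that \eqref{eq:violation_triangle} guarantees simultaneously positive savings of $\s_{12}$ on the $F_k/F_2$ interfaces and of $\s_{13}-\s_{23}$ on the $F_k/F_{4-k}$ interfaces. Everything else -- the Almgren-type volume correction, the relative isoperimetric inequality applied to $F_1\cap B_\tau(x)$, and the integration of the resulting ODI -- carries over verbatim from \cite{Leon01}.
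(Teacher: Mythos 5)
Your argument is correct and is precisely the ``easy modification of Theorem 3.1 in \cite{Leon01}'' that the paper invokes without writing out: the paper gives no proof of this theorem, and you have correctly identified the one point where the non--lower--semicontinuous hypothesis enters, namely that swapping $F_k$ for the middle phase $F_2$ yields a per-unit-area gain of at least $\min(\s_{12},\s_{13}-\s_{23})=\s_{12}$ on $\partial^*F_1$ (resp.\ $\min(\s_{23},\s_{13}-\s_{12})=\s_{23}$ on $\partial^*F_3$), so that the saving controls the full relative perimeter $\PPP(F_k;B_\tau\cap\Omega)$ and the Leonardi ODI argument goes through verbatim. One caveat: the statement is asserted for all $x\in\R^n$, whereas your isoperimetric step bounds $\PPP(F_1\cap B_\tau;\R^n)$ from above by $\PPP(F_1;B_\tau\cap\Omega)+\phi'(\tau)$, which drops the term $\H^{n-1}(\partial^*F_1\cap\partial\Omega\cap B_\tau)$; this is only legitimate when $B_\rho(x)\subset\Omega$, and for balls meeting $\partial\Omega$ one needs a relative isoperimetric inequality in $\Omega\cap B_\tau$, hence some regularity of $\partial\Omega$ (which the paper does not assume here either). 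Since every application of the theorem in the paper uses interior balls, this does not affect the rest of the arguments, but you should either restrict to $B_\rho(x)\subset\subset\Omega$ or add a hypothesis on $\partial\Omega$.
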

We now prove a result on the structure of the minimizers of \eqref{eq:main_functional}, which does not depend on the fact that the domain $\Omega$ is isoperimetrically foliated.
\begin{lemma} \label{lem:separation123}
Let $\Omega \subset \R^n$ be an open set and let $\mmm:= (m_1,m_2)$, with $0<m_1,m_2<\infty$ and $m_1 + m_2 < |\Omega|$. If $(F_1,F_2,F_3) \in \MM(\F_{\Omega,\mmm})$, then for every $x\in \partial^*F_2\cap \partial^*F_1 \cap \Omega$ (resp. $x\in \partial^*F_2\cap \partial^*F_3 \cap \Omega$) there exists $r>0$ such that 
\be\label{eq:separation123}
B_r(x)\cap F_3 = \emptyset \qquad \left(resp.\ B_r(x) \cap F_1 = \emptyset \right).
\ee
\end{lemma}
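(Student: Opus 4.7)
The plan is to reduce the lemma to the no\nobreakdash-infiltration estimate of Theorem \ref{thm:no-infiltration} applied with $k=3$, after verifying that the Lebesgue density $\theta_{F_3}(x)$ vanishes at the point $x$. The key observation is that at an interface point between $F_1$ and $F_2$, the third phase must locally disappear simply by the partition identity, and then no-infiltration upgrades this smallness to actual emptiness.

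First I fix $x \in \partial^* F_2 \cap \partial^* F_1 \cap \Omega$. Since $\Omega$ is open, $B_\rho(x) \subset \Omega$ for all sufficiently small $\rho>0$. Because $(F_1,F_2,F_3) \in \C_{\Omega,\mmm}$ is a partition of $\Omega$ up to an $\LL^n$\nobreakdash-null set, for such $\rho$ we have
$$|F_1 \cap B_\rho(x)| + |F_2 \cap B_\rho(x)| + |F_3 \cap B_\rho(x)| = |B_\rho(x)|.$$
Since $x$ lies in both reduced boundaries $\partial^* F_1$ and $\partial^* F_2$, the standard density property of the reduced boundary gives $\theta_{F_1}(x) = \theta_{F_2}(x) = 1/2$. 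Dividing the identity above by $|B_\rho(x)|$ and letting $\rho \to 0$ therefore yields $\theta_{F_3}(x) = 0$, that is, $|F_3 \cap B_\rho(x)| = o(\rho^n)$ as $\rho \to 0$.

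Now I apply Theorem \ref{thm:no-infiltration} with $k=3$: it supplies constants $\eta, r_0 > 0$ such that, for every $\rho < r_0$ and every $y \in \R^n$, the smallness $|F_3 \cap B_\rho(y)| < \eta \rho^n$ forces $|F_3 \cap B_{\rho/2}(y)| = 0$. By the previous step I can choose a single $\rho < r_0$ so small that $|F_3 \cap B_\rho(x)| < \eta \rho^n$, and the no-infiltration implication then yields $|F_3 \cap B_{\rho/2}(x)| = 0$. Setting $r := \rho/2$ proves the first claim (interpreting $F_3$, as usual for sets of finite perimeter, modulo $\LL^n$\nobreakdash-null sets; equivalently, by replacing $F_3$ with its measure-theoretic interior). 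The symmetric case $x \in \partial^* F_2 \cap \partial^* F_3$ follows verbatim, using $k=1$ in Theorem \ref{thm:no-infiltration} to obtain $|F_1 \cap B_{\rho/2}(x)| = 0$.

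I do not anticipate a genuine obstacle: all of the non\nobreakdash-trivial content, namely the energetic argument that exploits the strict violation of the triangle inequality \eqref{eq:violation_triangle}, is already packaged inside Theorem \ref{thm:no-infiltration}. The lemma itself only requires the elementary density computation above, together with the fact that Theorem \ref{thm:no-infiltration} is available precisely for the indices $k \in \{1,3\}$, which are exactly the ones needed in the two cases of the statement.
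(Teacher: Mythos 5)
Your proposal is correct and follows essentially the same route as the paper's own proof: both use the fact that a point of $\partial^*F_1\cap\partial^*F_2$ has density $1/2$ with respect to $F_1$ and $F_2$, hence density $0$ with respect to $F_3$, and then invoke Theorem \ref{thm:no-infiltration} to conclude $|F_3\cap B_{\rho/2}(x)|=0$. Your additional remarks (the partition identity making the density computation explicit, and the interpretation of emptiness modulo $\LL^n$-null sets) are harmless refinements of the same argument.
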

\begin{proof}
We consider $x \in \partial^*F_2 \cap \partial^*F_1 \cap \Omega$, 
since the other case follows by the same argument. By the definition of reduced boundary it follows that $x$ has density $1/2$ with respect to both $F_1$ and $F_2$. Thus, for a sufficiently small $r_0 > 0$ it holds $|F_3 \cap B_{r_0}(x)| < \eta r^n_0$ and Theorem \ref{thm:no-infiltration} ensures that $|F_3 \cap B_{r_0/2}(x)| = 0$. Consequently \eqref{eq:separation123} holds with $0 < r \leq r_0/2$.
\end{proof}
\begin{prop}
Let $\Omega \subset \R^n$ be an open set and let $\mmm:= (m_1,m_2)$, with $0<m_1,m_2<\infty$ and $m_1 + m_2 < |\Omega|$. If $(F_1,F_2,F_3) \in \MM(\F_{\Omega,\mmm})$, then there exists a constant $\g \in (0,1)$ such that, for every $\mbd F_1$ (resp. $x \in \mbd F_3$), it holds
\begin{equation} \label{eq:density_boundary}
\g \leq \frac{|F_1 \cap B_r(x)|}{\omega_n r^n} \leq 1-\g
\end{equation}
for all $r > 0$ such that $B_r(x) \subset \subset \Omega$.
\end{prop}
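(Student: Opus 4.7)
The plan is to show that $F_1$ (and by symmetry $F_3$) is a $(\L,r_0)$--minimizer of perimeter in $\Omega$ in the sense of \cite{Magg12}, and then to invoke the classical density estimates for such sets. For the lower bound in \eqref{eq:density_boundary} at small radii, no new argument is needed: if $x\in\mbd F_1\cap\Omega$ and $\rho<r$ (with $r$ as in Theorem \ref{thm:no-infiltration}) with $B_\rho(x)\subset\subset\Omega$, then by the very definition of $\mbd F_1$ we have $|F_1\cap B_{\rho/2}(x)|>0$, so the contrapositive of \eqref{eq:no-infiltration} forces $|F_1\cap B_\rho(x)|\ge\eta\rho^n$, giving the lower bound with $\g_0:=\eta/\omega_n$.

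For the upper bound I would use a volume--fixing variation. Fix once and for all a point $y_0\in\partial^*F_1\cap\partial^*F_2\cap\Omega$ and, via Lemma \ref{lem:separation123}, a ball $B_{\rho_0}(y_0)\subset\subset\Omega$ with $F_3\cap B_{\rho_0}(y_0)=\emptyset$. A standard construction (cf.\ \cite[Sect.~17.3]{Magg12}) produces a smooth one--parameter family of diffeomorphisms $\Phi_t:\Omega\to\Omega$ supported in $B_{\rho_0}(y_0)$ such that $t\mapsto|\Phi_t(F_1)|$ is a local diffeomorphism from a neighborhood of $0$ onto a neighborhood of $m_1$, and
$$
\bigl|\PPP(\Phi_t(F_1);\Omega)-\PPP(F_1;\Omega)\bigr|\le C\,\bigl||\Phi_t(F_1)|-m_1\bigr|
$$
for $|t|$ small. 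Let now $G_1\subset\Omega$ be any competitor with $G_1\triangle F_1\subset\subset B_s(z)\subset\Omega\setminus\overline{B_{\rho_0}(y_0)}$ and $s<r_0$. Apply $\Phi_t$ with $t$ chosen so that $|\Phi_t(G_1)|=m_1$, and set $\wtilde G_1:=\Phi_t(G_1)$, $\wtilde G_3:=F_3$, $\wtilde G_2:=\Omega\setminus(\wtilde G_1\cup\wtilde G_3)$. Since $F_3$ is absent from both $B_s(z)$ and $B_{\rho_0}(y_0)$, the triple $(\wtilde G_1,\wtilde G_2,\wtilde G_3)$ lies in $\C_{\Omega,\mmm}$ and only $\s_{12}$--interfaces are affected by the modification. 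Comparing with $(F_1,F_2,F_3)$ through its minimality yields
$$
\PPP(F_1;B_s(z))\le\PPP(G_1;B_s(z))+\L|F_1\triangle G_1|,
$$
so that $F_1$ is a $(\L,r_0)$--minimizer of perimeter in $\Omega\setminus\overline{B_{\rho_0}(y_0)}$. The upper density bound in \eqref{eq:density_boundary} at small scales is then the standard consequence (see e.g.\ \cite[Thm.~16.14]{Magg12}). Varying $y_0$ through a finite family of distinguished points (each chosen with the above property, which is possible because $\partial^*F_1\cap\partial^*F_2\cap\Omega$ is nonempty) covers $\mbd F_1\cap K$ for every compact $K\subset\Omega$, giving a uniform $\g$ on compact subsets; the remaining regime of large radii, if necessary, is handled by the global volume constraint $|F_1|=m_1$.

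The main obstacle is the construction of the volume--fixing variation: one has to ensure that the compensating diffeomorphism does not create any $\s_{13}$--interface, which would invalidate the cheap $\s_{12}$--only comparison behind the $\L$--minimality inequality. The separation granted by Lemma \ref{lem:separation123}, which allows the support ball $B_{\rho_0}(y_0)$ to be chosen away from $F_3$, is exactly what makes this work, and the same mechanism underlies the symmetric argument for $F_3$.
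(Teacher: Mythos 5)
Your lower bound is exactly the paper's argument (contrapositive of Theorem \ref{thm:no-infiltration} at points of $\mbd F_1$), so that part is fine. The upper bound, however, has a genuine gap: you assert that ``$F_3$ is absent from both $B_s(z)$ and $B_{\rho_0}(y_0)$'', but only the second half of this is justified (by your choice of $y_0$ via Lemma \ref{lem:separation123}). The competitor ball $B_s(z)$ is an \emph{arbitrary} ball in $\Omega\setminus\overline{B_{\rho_0}(y_0)}$, and $F_3$, having positive measure, certainly meets many such balls; worse, the points $x\in\mbd F_1$ at which the upper density bound is actually at stake are precisely those where $F_1$ may be in contact with $F_3$. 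When $F_3$ meets $B_s(z)$, replacing $F_1$ by $G_1$ can create $\partial^*G_1\cap\partial^*F_3$ interface weighted $\s_{13}$, and the comparison only yields a \emph{multiplicative} quasi-minimality of the form
\begin{equation*}
\s_{12}\,\PPP(F_1;B_s(z))\ \leq\ (\s_{13}-\s_{23})\,\PPP(G_1;B_s(z))+\Lambda\,|F_1\triangle G_1|\,,
\end{equation*}
not the additive $(\Lambda,r_0)$-minimality you invoke. (Such quasi-minimality would in fact still give two-sided density estimates, but that is a different argument from the one you wrote, and you would need to carry it out.)

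The paper closes exactly this hole with a dichotomy that your proposal skips: fix $x\in\mbd F_1$ and $r$, and suppose $|F_1\cap B_r(x)|>(1-\eta)\,\omega_n r^n$; then $|F_3\cap B_r(x)|<\eta\,\omega_n r^n$, so Theorem \ref{thm:no-infiltration} applied to $F_3$ gives $|F_3\cap B_{r/2}(x)|=0$. Only \emph{after} this step is the problem genuinely two-phase near $x$ (so that competitors supported there touch only $\s_{12}$-interfaces), and the standard volume-constrained density estimates apply; if instead $|F_1\cap B_r(x)|\le(1-\eta)\,\omega_n r^n$, the upper bound holds trivially. You should restructure the upper-bound argument around this dichotomy (or honestly prove and use the multiplicative quasi-minimality); as written, the key minimality inequality is derived from an unproved and generally false separation statement.
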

\begin{proof}
The inequality on the left-hand side of \eqref{eq:density_boundary} follows immediately from \eqref{eq:no-infiltration} applied to $\mbd F_1$ (resp. $\mbd F_3$) and it follows that $\eta \leq \g$.\\
We claim that the inequality on the right-hand side of \eqref{eq:density_boundary} holds for a $\g \geq \eta$ and we consider the case of $x \in \mbd F_1$, being the other case identical. Suppose that there exists $x \in \mbd F_1$, such that $\frac{|F_1 \cap B_r(x)|}{\omega_n r^n} > 1-\eta$. This implies that $\frac{|F_3 \cap B_r(x)|}{\omega_n r^n} < \eta$ and consequently (by \eqref{eq:no-infiltration}) that $|F_3 \cap B_{r/2}(x)| = 0$. Thus we conclude that $x \in \mbd F_1 \cap \mbd F_2$ and the standard regularity results for minimizing boundaries with fixed volume apply to give the desired estimate.
\end{proof}
Thanks to \eqref{eq:density_boundary}, arguing as in Proposition 3.5 of \cite{ANPa02} (see also \cite{ATWa93}, 3.4), we obtain the following estimate from below for the $n-1$ density of the minimizing boundaries. 
\begin{prop}
There exist $\theta > 0$ and $\overline{r} > 0$ such that, if $(F_1,F_2,F_3) \in \MM(\F_{\Omega,\mmm})$, for all $x \in \mbd F_1$ (resp. $x \in \partial F_3$) and for all $B_r(x) \subset \subset \Omega$, with $r < \overline{r}$ it holds
\begin{equation} \label{eq:per-geq-hausd}
P(F_1,B_r(x)) \geq \theta r^{n-1}\,.
\end{equation}
\end{prop}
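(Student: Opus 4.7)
The plan is to reduce the claim to a direct application of the relative isoperimetric inequality on Euclidean balls, using the density estimate \eqref{eq:density_boundary} as the substantive input. Fix $x \in \mbd F_1$ (the case $x \in \mbd F_3$ is entirely symmetric) and any $r$ with $B_r(x) \subset\subset \Omega$ and $r < \overline r$. The point of \eqref{eq:density_boundary} is that it rules out the density of $F_1$ at $x$ being arbitrarily close to $0$ or to $1$ on such scales, so that simultaneously
$$|F_1 \cap B_r(x)| \geq \g \omega_n r^n \quad \text{and} \quad |B_r(x) \setminus F_1| \geq \g \omega_n r^n.$$

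Next I would invoke the classical relative isoperimetric inequality in the Euclidean ball: there exists a purely dimensional constant $c_n > 0$ such that
$$P(F_1, B_r(x)) \geq c_n \min\bigl(|F_1 \cap B_r(x)|,\ |B_r(x) \setminus F_1|\bigr)^{(n-1)/n}.$$
Inserting the two-sided bound above, both quantities inside the minimum are at least $\g \omega_n r^n$, and hence
$$P(F_1, B_r(x)) \geq c_n (\g \omega_n)^{(n-1)/n} r^{n-1} =: \theta r^{n-1}.$$
This is the desired estimate, with $\theta$ depending only on $\g$ and on the dimension $n$, and with $\overline r$ being any positive radius compatible with the assumptions of the previous proposition (so that \eqref{eq:density_boundary} applies).

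I do not expect any genuine obstacle here: once \eqref{eq:density_boundary} is known, the argument is a one-line application of the relative isoperimetric inequality. The analogous lower bound for $P(F_3, B_r(x))$ at points of $\mbd F_3$ follows by exactly the same reasoning. This is precisely the pattern used in \cite{ANPa02} and \cite{ATWa93}, where the scale-invariance of the relative isoperimetric constant on balls makes the dimensional constant $\theta$ uniform in $r$.
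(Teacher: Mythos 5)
Your proposal is correct and follows essentially the same route as the paper, which gives no written proof but simply points to Proposition 3.5 of \cite{ANPa02} (and 3.4 of \cite{ATWa93}), where the argument is exactly the one you describe: the two-sided density bound \eqref{eq:density_boundary} combined with the relative isoperimetric inequality on the ball $B_r(x)$ yields the lower perimeter bound with a constant depending only on $\g$ and $n$. Nothing further is needed.
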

\begin{Corollary}
If $(F_1,F_2,F_3) \in \MM(\F_{\Omega,\mmm})$, it holds
\begin{equation} \label{eq:minimal-regularity}
\H^{n-1}((\mbd F_1 \cap \Omega) \setminus (\partial^* F_1 \cap \Omega)) = 0 \quad \quad \textrm{and} \quad \quad \H^{n-1}((\mbd F_3 \cap \Omega) \setminus (\partial^* F_3 \cap \Omega)) = 0 \,.
\end{equation}
\end{Corollary}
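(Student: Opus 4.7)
The plan is to apply a standard density comparison principle for Radon measures, using the perimeter lower bound \eqref{eq:per-geq-hausd} as the main input.

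First, I would recast \eqref{eq:per-geq-hausd} in terms of the Radon measure $\mu := \H^{n-1}\res \partial^* F_1$. Since $\PPP(F_1,\cdot) = \H^{n-1}\res \partial^* F_1$ for any set of finite perimeter, \eqref{eq:per-geq-hausd} reads
\begin{equation*}
\mu(B_r(x)) = \H^{n-1}(\partial^* F_1 \cap B_r(x)) \geq \theta\, r^{n-1}
\end{equation*}
for every $x \in \mbd F_1 \cap \Omega$ and every $r < \overline{r}$ with $B_r(x) \subset \subset \Omega$. Taking $\limsup_{r\to 0^+}$ then yields a uniform positive lower bound on the upper $(n-1)$-density of $\mu$ at every point of $\mbd F_1 \cap \Omega$.

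Next, I would invoke the classical comparison theorem between a Radon measure and the Hausdorff measure (for instance Theorem 2.56 in Ambrosio--Fusco--Pallara), which asserts that a uniform positive lower bound on the upper $(n-1)$-density of $\mu$ along a Borel set $A$ implies $\H^{n-1}(A) \leq C\, \mu(A)$ for a constant $C$ depending only on that bound. Applying this to
\begin{equation*}
A := (\mbd F_1 \cap \Omega) \setminus (\partial^* F_1 \cap \Omega),
\end{equation*}
which is disjoint from $\partial^* F_1$ and therefore $\mu$-negligible, immediately gives $\H^{n-1}(A) = 0$, establishing the first identity in \eqref{eq:minimal-regularity}. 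The verbatim argument with $F_3$ in place of $F_1$ produces the second identity.

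I do not anticipate any real obstacle in this last step: all the substantive work has already been carried out in the derivation of \eqref{eq:density_boundary} and \eqref{eq:per-geq-hausd}, and the comparison theorem is a standard off-the-shelf tool of geometric measure theory.
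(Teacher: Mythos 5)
Your argument is correct and is essentially identical to the paper's proof: both deduce from \eqref{eq:per-geq-hausd}, via the standard density comparison theorem, that $\H^{n-1}(B) \leq C\, |\grad \chi_{F_1}|(B)$ for Borel $B \subset \mbd F_1 \cap \Omega$, and then conclude by taking $B = (\mbd F_1 \cap \Omega) \setminus (\partial^* F_1 \cap \Omega)$, which is null for $|\grad \chi_{F_1}| = \H^{n-1}\res\partial^* F_1$. The only difference is that you cite the comparison theorem explicitly (AFP Theorem 2.56) where the paper states the resulting inequality directly.
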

\begin{proof}
We prove the claim just for $F_1$, since the proof for $F_3$ is identical. For any Borel set of $\R^n$, with $B \subset \mbd F_1$, by \eqref{eq:per-geq-hausd}, we have
\begin{equation*}
|\grad \chi_{F_1}|(B) \geq \frac{\theta}{\omega_{n-1}} \H^{n-1}(B)\,,
\end{equation*}
where $|\grad \chi_{F_1}|$ is the total variation measure associated to $\chi_{F_1}$. With the choice $B := \mbd F_1 \setminus (\partial^* F_1 \cap \Omega)$, since $|\grad \chi_{F_1}|$ is concentrated on $\partial^* F_1$, the thesis follows.
\end{proof}
\begin{prop}  \label{prop:separation-phases}
Let $\Omega \subset \R^n$ be an open set and let $\mmm:= (m_1,m_2)$, with $0<m_1,m_2<\infty$ and $m_1 + m_2 < |\Omega|$. 
For any $(F_1,F_2,F_3) \in \MM(\F_{\Omega,\mmm})$ it holds
\be \label{eq:separation-phases}
\H^{n-1}(\partial^*F_1 \cap \partial^*F_3 \cap \Omega) = 0\,.
\ee
\end{prop}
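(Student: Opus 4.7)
The plan is to argue by contradiction: assuming $\H^{n-1}(\partial^* F_1 \cap \partial^* F_3 \cap \Omega) > 0$, I will build an admissible competitor in $\C_{\Omega,\mmm}$ with strictly smaller energy, contradicting minimality. The underlying intuition is that near any nontrivial $F_1$--$F_3$ contact a thin slab of $F_2$ can be inserted, replacing an interface of cost $\s_{13}$ by two interfaces of total cost $\s_{12}+\s_{23}$; by the strict reverse triangle inequality \eqref{eq:violation_triangle} this exchange saves a definite amount of energy proportional to the slab area.

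First, by standard density properties of the reduced boundary, $\H^{n-1}$-a.e.\ $x_0 \in \partial^* F_1 \cap \partial^* F_3 \cap \Omega$ satisfies $\th_{F_1}(x_0)=\th_{F_3}(x_0)=1/2$, $\th_{F_2}(x_0)=0$ and $\partial^* F_1$, $\partial^* F_3$ admit a common approximate tangent hyperplane at $x_0$ with unit normal $\nu$. Pick such an $x_0$. Then for every $\eta>0$ there is $r_0>0$ with $B_{r_0}(x_0)\subset\subset \Omega$ such that, for all $r<r_0$, $|F_2\cap B_r(x_0)|\leq \eta r^n$ and each of $F_1$ and $F_3$ is $\eta r^n$-close in $L^1$ to the corresponding half-ball determined by $\nu$. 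Fix $r<r_0$ and $0<h\ll r$, and define the competitor $(F_1',F_2',F_3')$ to coincide with $(F_1,F_2,F_3)$ outside $B_r(x_0)$ and, inside $B_r(x_0)$, to be the flat slab configuration
\[
F_1':=\{(y-x_0)\cdot\nu>h\},\qquad F_3':=\{(y-x_0)\cdot\nu<-h\},\qquad F_2':=B_r(x_0)\setminus (F_1'\cup F_3').
\]
Bounding the old weighted perimeter in $B_r(x_0)$ from below by $\s_{13}\H^{n-1}(\partial^* F_1\cap\partial^* F_3\cap B_r(x_0))\geq \s_{13}\,\omega_{n-1}r^{n-1}(1-o_\eta(1))$ and computing the new one explicitly gives
\[
\F_{\Omega,\mmm}(F_1',F_2',F_3')-\F_{\Omega,\mmm}(F_1,F_2,F_3) \leq -(\s_{13}-\s_{12}-\s_{23})\,\omega_{n-1}r^{n-1}+o_\eta(r^{n-1})+Chr^{n-2},
\]
which is $\leq -c\,r^{n-1}$ for some $c>0$, provided $\eta$ and $h/r$ are sufficiently small.

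The slab surgery shifts $|F_1|$ and $|F_2|$ by an amount of order $hr^{n-1}$, so the constraints $|F_1'|=m_1$ and $|F_2'|=m_2$ have to be restored. Since $0<m_1,m_2$ and $m_1+m_2<|\Omega|$, both $\partial^* F_1\cap \Omega$ and $\partial^* F_2\cap \Omega$ have positive $\H^{n-1}$-measure; the decomposition $\partial^* F_1 = (\partial^* F_1\cap\partial^* F_2)\cup(\partial^* F_1\cap\partial^* F_3)$ (up to $\H^{n-1}$-null sets), and the analogous one for $\partial^* F_2$, produce regular pairwise contact points outside $B_r(x_0)$. At these points the classical volume-fixing lemma for sets of finite perimeter furnishes a compactly supported perturbation restoring both $|F_1'|=m_1$ and $|F_2'|=m_2$ with added weighted perimeter cost $O(hr^{n-1})$. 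For $h$ small enough once $r$ is fixed, the corrected competitor is admissible and has strictly smaller energy than $(F_1,F_2,F_3)$, contradicting minimality.

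The hardest step is the volume correction: one needs two independent compactly supported variations (one adjusting $|F_1|$ and one adjusting $|F_2|$) away from $B_r(x_0)$, each costing perimeter linearly in the mass shift, so that the $r^{n-1}$-order gain inside the slab dominates both the slab side-error $O(hr^{n-2})$ and the compensation cost $O(hr^{n-1})$ as $h\to 0$.
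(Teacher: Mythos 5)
Your strategy is genuinely different from the paper's. The paper argues globally: it replaces $F_1$ by a smooth strict interior approximation $F^{\e}_1\subset\subset \intF_1$ (Schmidt's theorem), so that the \emph{entire} $F_1$--$F_3$ interface disappears at once, being replaced by an $F_1$--$F_2$ interface of almost the same area plus the old $F_3$-boundary now counted as $F_2$--$F_3$; the strict inequality $\s_{13}>\s_{12}+\s_{23}$ then gives a definite gain proportional to $\H^{n-1}(\partial^*F_1\cap\partial^*F_3\cap\Omega)$, and only \emph{one} volume needs restoring (mass moved from $F_1$ to $F_2$ is returned by carving a small ball out of the interior of $F_2$, at perimeter cost $O(\d^{(n-1)/n})\to 0$). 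Your local slab surgery at a blow-up point is a legitimate alternative and, if completed, proves the same statement; but as written it has a genuine gap: when you redefine the three sets inside $B_r(x_0)$ you create a new interface on the sphere $\partial B_r(x_0)$ wherever the slab configuration's trace disagrees with that of $(F_1,F_2,F_3)$ from outside. You never account for this term, and a priori it contributes up to $n\omega_n r^{n-1}$ --- the \emph{same order} as your claimed gain --- so the displayed energy estimate does not follow from what you wrote. The standard fix is a Fubini/coarea selection of a good radius $s\in(r/2,r)$ using the $L^1$-closeness to half-balls, which makes the mismatch on $\partial B_s$ of size $O(\eta)r^{n-1}$; this step must be carried out.

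A secondary soft spot is the volume restoration. You assert that the decomposition of $\partial^*F_1$ and $\partial^*F_2$ ``produces regular pairwise contact points'' supporting two independent compactly supported volume-fixing variations, but you do not verify that the needed interfaces are nonempty (e.g.\ that $\PPP(F_2;\Omega)>0$ away from $B_r(x_0)$) nor that the two corrections can be localized disjointly from each other and from the surgery ball. This is where the paper's construction is more economical: by only shrinking $F_1$ and leaving $F_3$ untouched, a single ball inside $F_2$ (whose existence follows from Theorem \ref{thm:no-infiltration} at a density-one point of $F_2$) restores both constraints simultaneously. Also note that the volume-fixing cost is harmless here even with the crude ball construction, since for fixed $r$ it is $O\big((hr^{n-1})^{(n-1)/n}\big)\to 0$ as $h\to 0$; you do not actually need the linear-cost version of the lemma.
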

\begin{proof}
Since \eqref{eq:minimal-regularity} holds, we can identify 
$F_1$ with $\intF_1$ and $F_3$ with $\intF_3$.
Suppose by contradiction that $\H^{n-1}(\partial^*F_1 \cap \partial^*F_3) > 0$ and, for an $\e > 0$, we approximate $F_1$ from the inside with a smooth set $F^{\e}_1$ as in \cite{Schm15}, in such a way that
\begin{equation} \label{eq:perim-approx}
|P(F_1,\Omega) - P(F^{\e}_1,\Omega)| < \e \,.
\end{equation}
We now define $F^{\e}_3 := F_3$, $F^{\e}_2 := \Omega \setminus (\overline{F^{\e}_1} \cup \overline{F^{\e}_3})$ and we set $\d := |F_1|-|F^{\e}_1|$. In to restore the prescribed values of the masses, we consider a point $x \in \R^n$ with $\theta_{F_2}(x) = 1$ (which exists, since $|F_2| > 0$). By Theorem \ref{thm:no-infiltration}, there exists $r > 0$ such that $|B_r(x) \cap F_2| = |B_r(x)|$. We take $\e > 0$ small enough, so that $\d < |B_r(x)|$ and we define $\tilde{F}^{\e}_1 := F^{\e}_1 \cup B_{r'}(x)$ (with $|B_{r'}(x)| = \d$), $\tilde{F}^{\e}_2 := F^{\e}_2 \setminus B_{r'}(x)$ and $\tilde{F}^{\e}_3 := F^{\e}_3$. Since we have assumed $\H^{n-1}(\partial^*F_1 \cap \partial^*F_3 \cap \Omega) > 0$, taking into account \eqref{eq:perim-approx} and \eqref{eq:violation_triangle}, we conclude that $\F_{\Omega,\mmm}(\tilde{F}^{\e}_1,\tilde{F}^{\e}_2,\tilde{F}^{\e}_3) < \F_{\Omega,\mmm}(F_1,F_2,F_3)$, 
which contradicts the minimality of
$(F_1,F_2,F_3)$.
\end{proof}
\begin{teo}\label{teoreg}
Let $\Omega \subset \R^n$ be an open set and let $\mmm:= (m_1,m_2)$, with $0<m_1,m_2<\infty$ and $m_1 + m_2 < |\Omega|$. If $(F_1,F_2,F_3) \in \MM(\F_{\Omega,\mmm})$, then for any $i\in\{1,2,3\}$ the set 
$\partial F_i$ is of class $C^\infty$ out of a closed singular set 
with zero $\H^{n-1}$ measure.
\end{teo}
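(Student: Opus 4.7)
The plan is to reduce, at $\H^{n-1}$-a.e.\ point of each interface, the three-phase minimization problem to the classical volume-constrained perimeter problem, for which $C^\infty$ regularity away from a codimension-eight singular set is known. The whole reduction rests on Proposition \ref{prop:separation-phases}, which excludes the interface $\partial^* F_1 \cap \partial^* F_3$, combined with Lemma \ref{lem:separation123}, which upgrades that $\H^{n-1}$-separation to honest disjointness in a ball.

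First I would argue that, by Proposition \ref{prop:separation-phases}, $\H^{n-1}$-almost every $x \in \partial^* F_1 \cap \Omega$ lies in $\partial^* F_1 \cap \partial^* F_2 \cap \Omega$; at any such $x$, Lemma \ref{lem:separation123} yields $r > 0$ with $B_r(x) \subset \Omega$ and $B_r(x) \cap F_3 = \emptyset$. Inside $B_r(x)$ the only interface contributing to $\F_{\Omega,\mmm}$ is $\partial^* F_1 \cap \partial^* F_2$, weighted by $\s_{12}$; moreover, since $|F_2|>0$, Theorem \ref{thm:no-infiltration} supplies a ball $B_{r'}(y)$ at positive distance from $B_r(x)$ with $|B_{r'}(y) \cap F_2| = |B_{r'}(y)|$, which will serve as a mass-compensation region.

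Comparing $(F_1, F_2, F_3)$ with competitors obtained by modifying $F_1$ inside a small ball $B_\rho(x) \subset\subset B_r(x)$ and then restoring the masses of $F_1$ and $F_2$ by a small modification in $B_{r'}(y)$ (a correction whose perimeter cost is linear in the perturbation, since $B_{r'}(y)$ sits in the interior of $F_2$), the minimality of $(F_1,F_2,F_3)$ forces $F_1$ to be a $(\Lambda, r_0)$-perimeter minimizer near $x$ in the sense of \cite{Magg12}. The De Giorgi-Almgren regularity theorem then gives that $\partial F_1$ is $C^\infty$ in a neighborhood of $x$, with singular set of Hausdorff dimension at most $n-8$. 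The symmetric argument with $F_1$ and $F_3$ exchanged settles $\partial F_3$. For $F_2$, the same ball of Lemma \ref{lem:separation123} shows that $F_2 \cap B_r(x) = B_r(x) \setminus \overline{F_1}$, so $\partial F_2$ inherits the $C^\infty$ regularity of $\partial F_1$ at $x$, and analogously at points of $\partial^* F_2 \cap \partial^* F_3$.

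Summing up, the singular set of each $\partial F_i \cap \Omega$ is contained in the union of the classical codimension-$8$ singular sets from the $(\Lambda, r_0)$-minimality, the set $\partial^* F_1 \cap \partial^* F_3 \cap \Omega$ of zero $\H^{n-1}$-measure granted by Proposition \ref{prop:separation-phases}, and the $\H^{n-1}$-null set $(\mbd F_i \cap \Omega) \setminus (\partial^* F_i \cap \Omega)$ coming from \eqref{eq:minimal-regularity}; all three are $\H^{n-1}$-negligible, and the whole singular set is closed because $C^\infty$ regularity is an open condition. The main technical hurdle will be the $(\Lambda, r_0)$-minimality step, i.e.\ producing a volume-restoring modification at controlled perimeter cost; the construction already used in the proof of Proposition \ref{prop:separation-phases} transports essentially verbatim to the present setting, so no new idea should be needed.
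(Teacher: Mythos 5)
Your proposal is correct and follows essentially the same route as the paper: use \eqref{eq:minimal-regularity} and Proposition \ref{prop:separation-phases} to see that $\H^{n-1}$-a.e.\ point of $\partial F_1$ (resp.\ $\partial F_3$) is a two-phase point, invoke Lemma \ref{lem:separation123} to remove the third phase from a ball, apply classical regularity for volume-constrained perimeter minimizers there, and transfer the conclusion to $\partial F_2$. The only difference is that you spell out the $(\Lambda,r_0)$-minimality reduction with the mass-restoring ball, which the paper compresses into a citation of standard regularity theory.
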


\begin{proof}
By \eqref{eq:minimal-regularity} and Proposition \ref{prop:separation-phases} we have that $\H^{n-1}-$almost every  $x \in {\partial} F_1 \cap \Omega$ is an element of $\partial^* F_1\cap \partial^* F_2 \cap \Omega$. Thus, using Lemma \ref{lem:separation123}, there exist $r >0$ such that $B_r(x)\cap F_3 = \emptyset$ (with $B_r(x) \subset \subset \Omega$) and, by standard regularity theory for minimizing boundaries with prescribed volume (see \cite{Magg12}), we can conclude that $B_r(x)\cap \partial F_1 = B_r(x)\cap \partial F_2$ 
is $C^{\infty}$ on the complement of a closed set of Hausdorff dimension smaller or equal to $n-8$. In particular, ${\partial} F_1$ is $C^{\infty}$ on the complement of a closed set with zero $\H^{n-1}$ measure.\\
The same argument holds for the set $F_3$, and consequently ${\partial} F_3$ is $C^{\infty}$ on the complement of a closed set with zero $\H^{n-1}$ measure.\\
In particular, the sets ${\partial} F_2 \cap {\partial} F_1 \cap \Omega$ and $\partial F_2\cap {\partial} F_3 \cap \Omega$ are $C^{\infty}$ on the complement of a closed set with zero $\H^{n-1}$ measure. This implies that also $\partial F_2 \cap \Omega$ is $C^{\infty}$ on the complement of a closed set with zero $\H^{n-1}$ measure.
\end{proof}
\begin{Remark}
If we consider $\Omega := \R^2$ and $0<m_1,m_2<\infty$, it is easy to see that $\MM(\F_{\Omega,\mmm})$ is the set of the triples $(F_1,F_2,F_3)$, where $F_1$ is a metric ball with $|F_1| = m_1$, $F_2:= B^{(m_1 + m_2)}\setminus F_1$, where $B^{(m_1 + m_2)}$ is a metric ball of mass $m_1 + m_2$ which contains $F_1$, and $F^3 := \R^2 \setminus B^{(m_1 + m_2)}$. One of the possible minimizing configurations is realized when $F_1$ is tangent at a point (from the inside) to $F_2$. The point of contact between the two sets is not a point of $\partial^*F_2$ and this shows that even one dimensional minimizing boundaries for \eqref{eq:main_functional} are not necessarily everywhere regular.
\end{Remark}

\section{Nonexistence of minimizers in general domains}
In this final section we show that on domains $\Omega$ which do not satisfy Definition \ref{dfnz:isop-Foliation}, there are choices of $\mmm$ for which the infimum of $\F_{\Omega,\mmm}$ is not attained.
\begin{prop}
Let $\Omega = (0,1) \times (0,1) \subset \R^2$, $\s_{12}=\s_{23}=1$, $\s_{13} > 2$, $m_1 = \frac{\pi (1 + \e^2)}{16}$ (for $\e > 0$ small enough), $m_2 = 1/2 - \frac{\pi (1 + \e^2)}{16}$ and $m_3= 1/2$. The functional $\F_{\Omega , \mmm}$ has no minimum on $\C_{\Omega,\mmm}$.
\end{prop}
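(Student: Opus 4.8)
The plan is to reduce the three-phase problem to a two-set perimeter problem and to locate the obstruction in the incompatibility, forced by the sides of the square, between the isoperimetric set of mass $m_1$ and that of mass $1/2$. Writing $\ell_{ij}:=\H^{n-1}(\partial^*F_i\cap\partial^*F_j\cap\Omega)$ and using $\s_{12}=\s_{23}=1$, exactly as in the computation proving Theorem \ref{prop:existence-minimizers} one gets for every $(F_1,F_2,F_3)\in\C_{\Omega,\mmm}$ the identity
\be
\F_{\Omega,\mmm}(F_1,F_2,F_3)=\PPP(F_1;\Omega)+\PPP(F_3;\Omega)+(\s_{13}-2)\,\ell_{13}\,.
\ee
Since $\PPP(F_1;\Omega)+\PPP(F_3;\Omega)$ is lower semicontinuous and the constraints ($|F_1\cap F_3|=0$ and the prescribed masses) are closed under $L^1$ convergence in the bounded domain $\Omega$, the value
\be
V_0:=\inf\big\{\PPP(F_1;\Omega)+\PPP(F_3;\Omega)\ :\ |F_1\cap F_3|=0,\ |F_1|=m_1,\ |F_3|=1/2\big\}
\ee
is attained. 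From the displayed identity $\F_{\Omega,\mmm}\ge V_0$, with equality forcing $\ell_{13}=0$ (here $\s_{13}>2$ is used) together with $\PPP(F_1;\Omega)+\PPP(F_3;\Omega)=V_0$; conversely, separating phases $1$ and $3$ by a film of $F_2$ of vanishing width turns any competitor realizing $V_0$ into admissible configurations whose energy tends to $V_0$. Hence $\inf\F_{\Omega,\mmm}=V_0$, and $\F_{\Omega,\mmm}$ is minimized precisely when $V_0$ is realized by a pair with $\ell_{13}=0$.

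Next I would record the geometric obstruction. By the isoperimetric inequality $V_0\ge\PPP(E^{(m_1)};\Omega)+\PPP(E^{(1/2)};\Omega)=\tfrac{\pi}{4}\sqrt{1+\e^2}+1$, where $E^{(m_1)}$ is a quarter-disk of radius $r_1$ at a vertex of $\Omega$ and $E^{(1/2)}$ is a half-square cut by a segment through a side-midpoint. The decisive point is that
\be
r_1=\sqrt{4m_1/\pi}=\tfrac12\sqrt{1+\e^2}>\tfrac12\qquad(\e>0)\,,
\ee
so $E^{(m_1)}$ protrudes past both midlines $\{x=1/2\}$ and $\{y=1/2\}$ and therefore meets every half-stripe of mass $1/2$ in a set of positive Lebesgue measure. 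Consequently the two isoperimetric sets can never be realized disjointly, the lower bound for $V_0$ is strict, and any near-optimal disjoint pair must consist of a quarter-disk–like $F_1$ pressed against a stripe–like $F_3$ along the region where the former would overlap the latter.

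The conclusion I would draw is that every minimizer of $V_0$ has $\ell_{13}>0$, whence by the first paragraph $\F_{\Omega,\mmm}$ has no minimizer: its infimum $V_0$ is approached only through configurations with a thinner and thinner layer of $F_2$ wedged between $F_1$ and $F_3$, and in the limit phases $1$ and $3$ touch, which the strict inequality $\s_{13}>2$ penalizes. The main obstacle is exactly this structural statement — that for the present masses the perimeter-minimizing disjoint pair necessarily shares boundary of positive length. I would establish it by combining the regularity of $V_0$-optimal configurations (their free interfaces are circular arcs or segments of constant curvature meeting $\partial\Omega$ orthogonally) with the pinning produced by $\partial\Omega$: since $r_1>\tfrac12$ keeps $E^{(m_1)}$ jammed into a corner against a width-$\tfrac12$ strip, one cannot open a gap between the optimally placed quarter-disk and stripe without strictly increasing $\PPP(F_1;\Omega)+\PPP(F_3;\Omega)$ above $V_0$, so that no separated competitor attains $V_0$. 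Quantifying this strict increase — ruling out a cost-free separation, which rigid translations cannot furnish because $\partial\Omega$ blocks them — is the delicate step, and it is precisely where the square violates Definition \ref{dfnz:isop-Foliation}.
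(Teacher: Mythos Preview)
Your reformulation via $V_0$ is correct and gives a clean picture: the identity $\F_{\Omega,\mmm}=\PPP(F_1;\Omega)+\PPP(F_3;\Omega)+(\s_{13}-2)\ell_{13}$, the relaxation $\inf\F_{\Omega,\mmm}=V_0$, and the equivalence ``$\F_{\Omega,\mmm}$ has a minimizer if and only if some $V_0$-minimizer has $\ell_{13}=0$'' are all valid (the thin-film construction is essentially the inner approximation used in Proposition~\ref{prop:separation-phases}). However, you explicitly leave the decisive implication --- that every $V_0$-minimizer has $\ell_{13}>0$ --- as a heuristic (``the delicate step''), supported only by the observation that the individually isoperimetric sets $E^{(m_1)}$ and $E^{(1/2)}$ overlap when $r_1>\tfrac12$. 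That observation yields the strict inequality $V_0>\PPP(E^{(m_1)};\Omega)+\PPP(E^{(1/2)};\Omega)$, but it does \emph{not} by itself rule out a disjoint pair with $\ell_{13}=0$ realising $V_0$: such a pair would simply fail to consist of the two individually optimal shapes. Your ``pinning'' remark describes a plausible mechanism, not a proof.

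The paper closes this gap more concretely and without the $V_0$ detour. It assumes a minimizer of $\F_{\Omega,\mmm}$ exists, invokes Proposition~\ref{prop:separation-phases} (already proved for general domains) to obtain $\ell_{13}=0$, and then uses the regularity of Theorem~\ref{teoreg} to conclude that each of $\partial F_1\cap\Omega$ and $\partial F_3\cap\Omega$ is a circular arc or a straight segment meeting $\partial\Omega$ orthogonally. This reduces the question to a finite list of explicit configurations --- two stripes, two quarter-disks at various corners, a quarter-disk nested inside the complement of a larger concentric one --- whose energies are computed and compared with a concrete competitor of energy strictly below $2$. That explicit enumeration, not an abstract rigidity or compactness argument, is what finishes the proof; once you invoke regularity in your framework you are led to the same case analysis, so the $V_0$ reformulation is organizational rather than a genuine shortcut, and the missing comparison step still has to be written out.
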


\begin{proof}
We argue by contradiction. If the infimum of $\F_{\Omega,\mmm}$ would be attained in correspondence of a triple $(F_1,F_2,F_3)$, by Proposition \ref{prop:separation-phases}, we would have that $\H^{n-1}(\partial^* F_1 \cap \partial^*F_3 \cap \Omega) = 0$ and, by standard regularity, the interfaces $\partial^* F_1 \cap \partial^* F_2 \cap \Omega $ and $\partial^* F_2 \cap \partial^* F_3 \cap \Omega$ would be either straight segments or circular arcs meeting $\partial \Omega$ orthogonally. If both $\partial^* F_1 \cap \partial^* F_2 \cap \Omega$ and $\partial^* F_2 \cap \partial^* F_3 \cap \Omega$ were segments, the value of the minimum of \eqref{eq:main_functional} would be $2$. This is a contradiction to minimality, since, for a sufficiently small $\e > 0$, $2 > 1 + \frac{\sqrt{\pi(1 + \e)}}{4}$ and $ 1 + \frac{\sqrt{\pi(1 + \e)}}{4}$ is the value attained by $\F_{\Omega , \mmm}$ for $F_1 = (\{x^2 + y^2 < 1/2\} \cup \{x^2 + (y-1)^2 = \e/2 \})\cap \Omega$, $F_3 = \Omega \cap  \{(x,y) \in \R^2, x > 1/2\}$ and $F_2 = \Omega \setminus \overline{(F_1 \cup F_3)}$. If $F_1$ and $F_3$ would be quarter of disks centred at different corners of $\Omega$, it is easy to see that they would overlap and consequently they could not be element of a triple in $\C_{\Omega,\mmm}$. If we set $F_1 = \{x^2 + y^2 < \frac{1+\e^2}{4}\}$, $F_3 = \Omega \setminus \{x^2 + y^2 < \sqrt{\frac{2}{\pi}}\}$ and $F_2 = \Omega \setminus \overline{(F_1 \cup F_3)}$, we still have $\F_{\Omega , \mmm}(F_1 , F_2 , F_3) = \sqrt{\frac{\pi}{2}} + \frac{\pi}{4} > 2$ and $F_2 = $. As a consequence, $\F_{\Omega , \mmm}$ has no minimum on $\C_{\Omega,\mmm}$.
\end{proof}

\end{document}